\let\origsection=\section \def\section{\@ifstar{\origsection*}{\mysection}} 
\def\mysection{\@startsection{section}{1}\z@{.7\linespacing\@plus\linespacing}{.5\linespacing}{\normalfont\scshape\centering\S}}
\renewcommand{\PrintDOI}[1]{\doi{#1}}
\numberwithin{equation}{section}
\numberwithin{figure}{section}
\let\polishlcross=\l
\def\l{\ifmmode\ell\else\polishlcross\fi}
\def\moverlay{\mathpalette\mov@rlay}
\def\mov@rlay#1#2{\leavevmode\vtop{   \baselineskip\z@skip \lineskiplimit-\maxdimen
   \ialign{\hfil$\m@th#1##$\hfil\cr#2\crcr}}}
\newcommand{\charfusion}[3][\mathord]{
    #1{\ifx#1\mathop\vphantom{#2}\fi
        \mathpalette\mov@rlay{#2\cr#3}
      }
    \ifx#1\mathop\expandafter\displaylimits\fi}
\DeclareFontFamily{U}  {MnSymbolC}{}
\DeclareSymbolFont{MnSyC}         {U}  {MnSymbolC}{m}{n}
\DeclareFontShape{U}{MnSymbolC}{m}{n}{
    <-6>  MnSymbolC5
   <6-7>  MnSymbolC6
   <7-8>  MnSymbolC7
   <8-9>  MnSymbolC8
   <9-10> MnSymbolC9
  <10-12> MnSymbolC10
  <12->   MnSymbolC12}{}
\DeclareMathSymbol{\powerset}{\mathord}{MnSyC}{180}
\theoremstyle{plain}
\newtheorem{theorem}{Theorem}[section]		
\newtheorem{lemma}[theorem]{Lemma}
\newtheorem{claim}[theorem]{Claim}
\newtheorem{proposition}[theorem]{Proposition}
\newtheorem{conjecture}[theorem]{Conjecture}
\theoremstyle{remark}
\newcommand{\eps}{\ensuremath{\varepsilon}}
\def\Prob{\mathds{P}}
\def\E{\mathds{E}}
\def\N{\mathds{N}}
\def\R{\mathds{R}}
\def\CP{\mathcal{P}}
\def\Gr{G^{(r)}}
\def\eu{\mathrm{e}}
\let\originalleft\left
\let\originalright\right
\renewcommand{\left}{\mathopen{}\mathclose\bgroup\originalleft}
\renewcommand{\right}{\aftergroup\egroup\originalright}
\def\imod#1{\allowbreak\mkern10mu({\operator@font mod}\,\,#1)}
\begin{document}
\title{Sharp thresholds for nonlinear Hamiltonian cycles in hypergraphs}

\author{Bhargav Narayanan}
\address{Department of Mathematics, Rutgers University, Piscataway, NJ 08854, USA}
\email{narayanan@math.rutgers.edu}

\author{Mathias Schacht}
\address{Department of Mathematics, Yale University, New Haven, USA}
\email{mathias.schacht@yale.edu}

\subjclass[2010]{Primary 05C80; Secondary 05C65, 05C45}

\begin{abstract}
	For positive integers $r > \ell$, an $r$-uniform hypergraph is called an \emph{$\ell$-cycle} if there exists a cyclic ordering of its vertices such that each of its edges consists of~$r$ consecutive vertices, and such that every pair of consecutive edges (in the natural ordering of the edges) intersect in precisely $\ell$ vertices. Such cycles are said to be \emph{linear} when~$\ell = 1$, and \emph{nonlinear} when $\ell > 1$. We determine the sharp threshold for nonlinear Hamiltonian cycles and show that for all $r > \ell > 1$, the threshold $p^*_{r, \ell} (n)$ for the appearance of a Hamiltonian $\ell$-cycle in the random $r$-uniform hypergraph on $n$ vertices is sharp and \mbox{is~$p^*_{r, \ell} (n) = \lambda(r,\l) (\frac{\eu}{n})^{r - \ell}$} for an explicitly specified function $\lambda$. 
	This resolves several questions raised by Dudek and Frieze in 2011.
\end{abstract}

\maketitle

\section{Introduction}
A basic problem in probabilistic combinatorics concerns locating the critical density at which a substructure of interest appears inside a random structure (with high probability). In the context of random graph theory, the question of when a random graph contains a \emph{Hamiltonian cycle} has received considerable attention.
Indeed, from the foundational works of P\'osa~\cite{ham1}, Koml\'os and Szemer\'edi~\cite{ham2}, Bollob\'as~\cite{ham3}, and Ajtai, Koml\'os and Szemer\'edi~\cite{ham4}, we have a very complete picture, understanding not only the \emph{sharp threshold} for this problem but the \emph{hitting time} as well. Since these early breakthroughs, there have been a number of papers locating thresholds, both asymptotic and sharp, for various spanning subgraphs of interest (see, e.g.,~\cites{oliver,richard} and the references therein for various related results).

In contrast, threshold results for spanning structures in the context of random hypergraph theory have been somewhat harder to come by. Indeed, even the basic question of locating the asymptotic threshold at which a random $r$-uniform hypergraph (or \emph{$r$-graph}, for short) contains a matching, i.e., a spanning collection of disjoint edges, proved to be a major challenge, resisting the efforts of a number of researchers up until the breakthrough work of Johansson, Kahn and Vu~\cite{jkv}; more recently, both the sharp threshold as well as the hitting time for this problem have been obtained by Kahn~\cite{jeff}. In the light of these developments for matchings, we study what is perhaps the next most natural question in this setting, namely that of when a random $r$-graph contains a Hamiltonian cycle; our main contribution is to resolve the sharp threshold problem for nonlinear Hamiltonian cycles.

There are multiple notions of cycles in hypergraphs, so let us recall the relevant definitions: given positive integers $r > \ell \ge 1$, an $r$-graph is called an \emph{$\ell$-cycle} if there exists a cyclic ordering of its vertices such that each of its edges consists of $r$ consecutive vertices in the ordering, and such that every pair of consecutive edges (in the natural ordering of the edges) intersect in precisely $\ell$ vertices (see Figure~\ref{pic:cycle} for an example). A \emph{Hamiltonian $\ell$-cycle} is then an $\ell$-cycle spanning the entire vertex set; of course, an $r$-graph on $n$ vertices may only contain a Hamiltonian $\ell$-cycle when $(r-\ell) \,|\, n$, and such a cycle then has precisely $n / (r - \ell)$ edges. Finally, by convention, an $\ell$-cycle is called \emph{linear} (or \emph{loose}) when $\ell = 1$, \emph{nonlinear} when $\ell > 1$, and \emph{tight} when $\ell = r-1$.

Given $r > \ell \ge 1$, we set
\[\lambda(r,\l) = t!\cdot(s-t)!,\]
where $s = r - \ell$ and $1 \le t \le s$ is the unique integer satisfying $t = r \imod{s}$, and define
\[ p^*_{r, \ell} (n) = \frac{\lambda(r,\l)\eu^s}{  n^{s}}.
\]
Writing $\Gr(n,p)$ for the binomial random $r$-graph on $n$ vertices, where each possible $r$-set of vertices appears as an edge independently with probability $p$, our main result is as follows.
\begin{theorem}\label{mainthm}
	For all integers $r > \ell > 1$ and all $\eps > 0$, as $n \to \infty$ with $(r-\ell) \,|\, n$, we have
	\[
		\Prob \left( \Gr(n,p) \text{ contains a Hamiltonian $\ell$-cycle} \right) \to
		\begin{cases}
			1 & \mbox{if } p > (1+ \eps)p^*, \text{ and} \\
			0 & \mbox{if } p < (1 -\eps)p^*,             \\
		\end{cases}
	\]
	where we abbreviate $p^* = p^*_{r, \ell} (n)$.
\end{theorem}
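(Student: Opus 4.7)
The theorem separates into a 0-statement (handled by a first moment bound) and a 1-statement, the latter carrying the essential difficulty.

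For the 0-statement, let $s = r - \l$, $m = n/s$, and let $X$ count Hamiltonian $\l$-cycles in $\Gr(n,p)$. Write $r = qs+t$ with $1 \le t \le s$. A Hamiltonian $\l$-cycle on $[n]$ corresponds to a cyclically-ordered partition $B_1,\dots,B_m$ of $[n]$ into blocks of $s$ vertices (the ``new'' vertices contributed by successive edges) together with a choice of ``heavy'' subset $H_j \subset B_j$ of size $t$ in each block (those shared with $q+1$ consecutive edges rather than $q$); enumerating gives
\[ N = \frac{n!}{(s!)^m m!} \cdot \frac{(m-1)!}{2} \cdot \binom{s}{t}^m = \frac{n!}{2m\,\lambda(r,\l)^m}. \]
Then $\E[X] = Np^m$, and plugging in $p = (1-\eps)p^*$ together with Stirling's formula gives $\E[X] \sim (s/2)\sqrt{2\pi/n}\,(1-\eps)^{n/s} \to 0$; Markov's inequality handles the 0-statement.

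For the 1-statement, since $\E[X] \to 0$ at $p^*$ the usual second moment method is ineffective; my plan is to reduce to the sharp threshold for perfect matchings in random hypergraphs (Johansson--Kahn--Vu~\cite{jkv} and Kahn~\cite{jeff}). The rough idea: for a fixed cyclic block partition $(B_j)$ of $[n]$, each edge $e_i = B_i \cup \cdots \cup B_{i+q-1} \cup H_{i+q}$ of a compatible cycle depends only on one heavy-subset variable, giving $m$ block-local constraints each with $\binom{s}{t}$ candidate edges in $\Gr(n,p)$; the question of whether a compatible cycle exists then becomes a ``system of $m$ independent one-variable satisfiability constraints''. Averaging over choices of partition of $[n]$ and applying Kahn's matching threshold to the auxiliary $r$-graph of candidate edges should deliver the desired conclusion.

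The principal obstacle is matching the constant $\lambda(r,\l) = t!(s-t)! = s!/\binom{s}{t}$ exactly---every factor in the reduction must align so that the resulting matching threshold lands precisely at $p^* = \lambda(\eu/n)^s$, not merely within a constant factor of it. A careful sprinkling or two-round exposure argument is likely required to decouple the randomness of the partition from the randomness of the edges of $\Gr(n,p)$, and one must verify that the auxiliary $r$-graph inherits the quasi-regularity Kahn's theorem demands. A companion absorbing-method approach (reserve an absorbing $\l$-path, build a near-spanning $\l$-path in the complement via a second moment argument on configuration space, and close via the absorber) faces the same sharp-constant issue and would ultimately rely on the same enumerative identity; this is the main technical content of the paper.
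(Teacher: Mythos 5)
Your treatment of the $0$-statement is correct and essentially matches the paper: the enumeration $N = n!/(2m\lambda^m)$ (or, equivalently, working up to subblock symmetry with $|Q_n| = n!/\lambda^m$, which differs only by a harmless polynomial factor) gives $\E[X]\to 0$ for $p < (1-\eps)p^*$, and Markov finishes. That part is fine.

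For the $1$-statement there is a genuine gap, and it is the whole difficulty. Your proposed reduction to the Johansson--Kahn--Vu/Kahn matching threshold is not an argument; it is a hope. Fixing a cyclic block partition $(B_j)$ does make the $m$ constraints ``factor'' into one heavy-subset variable each, but the resulting probability $(1-(1-p)^{\binom{s}{t}})^m \approx (\binom{s}{t}p)^m$ is exponentially small for each fixed partition, so all the work is in the union over the roughly $n!/(2m(s!)^m)$ partitions --- and that union is precisely the Hamiltonicity problem, not a matching problem. There is no natural auxiliary $r$-graph in which the cycle becomes a perfect matching, the ``auxiliary graph of candidate edges'' is not an i.i.d.\ binomial hypergraph to which Kahn's theorem could be applied, and you rightly sense (but do not resolve) that the constant $\lambda = t!(s-t)!$ would have to emerge exactly from such a reduction; nothing in the sketch makes that happen. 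The phrase ``should deliver the desired conclusion'' is doing all of the work, and it does not.

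What the paper actually does is very different and you should note it, because it is the key idea. First, one shows the second moment bound $\E[X^2] = O(\E[X]^2)$ for $p = (1+\eps)p^*$ (this requires counting permutations up to subblock equivalence, since otherwise the subblock symmetry alone inflates $\E[X^2]/\E[X]^2$ by a divergent factor, and it requires a two-stage analysis separating ``canonical'' from ``non-canonical'' intersections of two cycles). By Paley--Zygmund this only yields $\Prob(X>0) \ge \delta > 0$ for some constant $\delta$, not $\to 1$; the paper itself points out the second moment is too large to do better (already for $r=3, \ell=2$). The missing ingredient is Friedgut's criterion for coarse thresholds: one shows that if the Hamiltonian $\ell$-cycle property had a coarse threshold, then adding a single random copy of a bounded $\ell$-path $P$ to a suitable $H_n$ would boost the probability of Hamiltonicity much more than adding $\Theta(n^{-s})$ worth of random edges, but a routine second moment count of useful copies of $P$ shows the latter creates $\Omega(n^\ell)$ useful copies in expectation and hence at least one w.h.p., a contradiction. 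Hence the property has a sharp threshold, and the Paley--Zygmund bound pins that sharp threshold to $p^*$. You should replace the matching-reduction sketch with this two-part argument (constant-probability second moment at $(1+\eps)p^*$ $+$ Friedgut's coarse-threshold characterisation); as written, your $1$-statement is not proved.
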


\begin{figure}
	\begin{center}
		\trimbox{0cm 0cm 0cm 0cm}{
			\begin{tikzpicture}[scale = 0.6]
				\foreach \x in {0,1,2,3,4,5,6,7}
				\node at (3*\x, 0) [inner sep=0.5mm, circle, fill=black!100] {};
				\foreach \x in {0,1,2,3,4,5,6}
				\node at (3*\x+1, 0) [inner sep=0.5mm, circle, fill=black!100] {};
				\foreach \x in {0,1,2,3,4,5,6}
				\node at (3*\x+2, 0) [inner sep=0.5mm, circle, fill=black!100] {};
				\foreach \x in {0,3,6,9,12,15}
				\draw[shift={(\x,0)}] (3,0) ellipse (3.5cm and 0.5cm);
				\draw[dotted, thick] (-1.5,0)--(-0.5,0);
				\draw[dotted, thick] (21.5,0)--(22.5,0);
			\end{tikzpicture}
		}
	\end{center}
	\caption{A $7$-uniform $4$-cycle.}\label{pic:cycle}
\end{figure}
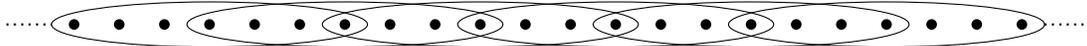

The critical density $p^*_{r, \ell}$ appearing in our result corresponds to the so-called `expectation threshold', namely the density above which the expected number of Hamiltonian $\ell$-cycles in~$\Gr(n,p)$ begins to diverge. A moment's thought should convince the reader that, unlike in the case of linear Hamiltonian cycles where one has to worry about isolated vertices, there are no `coupon collector type' obstacles to the presence of nonlinear Hamiltonian cycles; therefore, the conclusion of Theorem~\ref{mainthm} should not come as a surprise. Indeed, the problem of whether something like Theorem~\ref{mainthm} ought to hold was raised by Dudek and Frieze~\cites{loose, tight}. Towards such a result, they showed that $p^*_{r, \ell}$ is an asymptotic threshold for all nonlinear Hamiltonian cycles, that $p^*_{r, \ell}$ is a sharp threshold for tight Hamiltonian cycles when $r\ge 4$, and that $p^*_{r, \ell}$ is a semi-sharp threshold for all $r > \ell \ge 3$.

The main difficulty in proving Theorem~\ref{mainthm} is that, with the exception of the case of tight Hamiltonian cycles with $r \ge 4$ mentioned earlier, the second moment method is in itself not sufficient to prove the result; for instance, it is easy to verify, even in the simple case of $r = 3$ and $\ell = 2$ (i.e., tight Hamiltonian cycles in $3$-graphs) that the requisite second moment is too large to yield our result. To prove Theorem~\ref{mainthm}, we shall combine a careful second moment estimate, which necessitates working modulo various symmetries, with a powerful theorem of Friedgut~\cite{friedgut1} characterising coarse thresholds.

This paper is organised as follows. We gather the tools we require in Section~\ref{prelim}. The proof of Theorem~\ref{mainthm} follows in Section~\ref{proof}. We conclude in Section~\ref{conc} with a discussion of some open problems.
\section{Preliminaries}\label{prelim}
We begin with some background on thresholds. Recall that a \emph{monotone $r$-graph property~$W$} is a sequence $(W_n)_{n \ge 0}$ of families of $r$-graphs, where $W_n$ is a family of $r$-graphs on~$n$ vertices closed under the addition of edges and invariant under $r$-graph isomorphism.

Given a monotone $r$-graph property~$W = (W_n)_{n \ge 0}$,  a function $p^*(n)$ is said to be a \emph{threshold} or \emph{asymptotic threshold} for $W$  if $\Prob(\Gr(n,p) \in W_n)$ tends, as $n \to \infty$, either to $1$ or $0$ as $p / p^*$ tends either to $\infty$ or $0$ respectively, and a function $p^*(n)$ is said to be a \emph{sharp threshold} for  $W$ if $\Prob(\Gr(n,p) \in W_n)$ tends, as $n \to \infty$, either to $1$ or $0$ as $p / p^*$ remains bounded away from $1$ either from above or below respectively. Of course, thresholds and sharp thresholds are not unique, but following common practice, we will often say `the' threshold or sharp threshold when referring to the appropriate equivalence class of functions. Finally, a function $p^*(n)$ is said to be a \emph{semi-sharp threshold} for $W$ if there exist constants $C_0 \le 1 \le C_1$ such that $\Prob(\Gr(n,p) \in W_n)$ tends, as $n \to \infty$, either to $1$ or $0$ as~$p / p^*$ remains bounded below by $C_1$ or above by $C_0$ respectively; while we do not need this notion ourselves, we give this definition to place existing results around our main result in the appropriate context.

That every monotone property has an asymptotic threshold follows from a (much more general) result of Bollob\'as and Thomason~\cite{thresh}. Unlike with asymptotic thresholds, a monotone property need not necessarily have a sharp threshold; such properties are said to have \emph{coarse thresholds}. We shall make use of a powerful characterisation of monotone properties that have coarse thresholds due to Friedgut~\cite{friedgut1} which says, roughly, that such properties are `approximable by a local property'; a concrete formulation at a level of generality sufficient for our purposes, see~\cite{friedgut2}, is as follows.

\begin{proposition}\label{st}
	Fix $r \in \N$ and let $W = (W_n)_{n \ge 0}$ be a monotone $r$-graph property that has a coarse threshold. Then there exists a constant $\alpha > 0$, a threshold function $\hat p = \hat p(n)$ with
	\[ \alpha < \Prob\big(\Gr(n, \hat p) \in W_n\big) < 1-3\alpha \]
	for all $n \in \N$, a constant $\beta > 0$ and a fixed $r$-graph $F$ such that the following holds: for infinitely many $n \in \N$, there exists an $r$-graph on $n$ vertices $H_n \notin W_n$ such that
	\[ \Prob\big(H_n \cup \Gr(n,\beta \hat p) \in W_n\big) < 1-2\alpha, \]
	where the random $r$-graph $\Gr(n, \beta \hat p)$ is taken to be on the same vertex set as $H_n$, and
	\[ \Prob\big(H_n \cup \tilde F \in W_n\big) > 1-\alpha, \]
	where $\tilde F$ denotes a random copy of $F$ on the same vertex set as $H_n$. \qed
\end{proposition}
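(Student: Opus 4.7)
This proposition packages Friedgut's characterization of coarse thresholds from \cite{friedgut1}, so my plan would be to follow Friedgut's original strategy: combine Russo's formula with Bourgain's sharp-threshold criterion for monotone Boolean functions of bounded total influence.

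The first step is to translate the coarse threshold hypothesis into an analytic bound. Writing $\mu_p(n) = \Prob(G^{(r)}(n,p) \in W_n)$, a coarse threshold means that for infinitely many $n$, there is an interval $[p_0, p_1]$ with $p_1/p_0$ bounded above by some constant on which $\mu_p$ passes from below $\alpha'$ to above $1-\alpha'$ for some fixed small $\alpha' > 0$. Russo's lemma identifies $p \cdot \frac{d\mu_p}{dp}$ with a weighted sum of pivotal-edge probabilities (i.e.\ the total influence at level $p$), so integrating the inequality $\int_{p_0}^{p_1} p^{-1} \cdot p \mu_p'\,dp \le 1$ against $\log(p_1/p_0) = O(1)$ produces a point $\hat p = \hat p(n) \in [p_0, p_1]$ with $\mu_{\hat p}$ in the window $(\alpha, 1 - 3\alpha)$ and with total influence uniformly bounded by some absolute constant $C$.

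With bounded total influence in hand, I would then invoke Bourgain's theorem: a monotone property with bounded total influence and $\mu_{\hat p}$ bounded away from $0$ and $1$ must admit a local obstruction, meaning there is a fixed $r$-graph $F$ and a constant $\delta > 0$ such that conditioning on $F \subseteq G$ (on a positive fraction of choices of vertex set for $F$) increases $\mu_{\hat p}$ by at least $\delta$. One then extracts the required $H_n$ by a two-round exposure: sample $G' \sim G^{(r)}(n, (1-\beta)\hat p)$, so that together with an independent $G^{(r)}(n, \beta \hat p)$ the union is distributed (essentially) as $G^{(r)}(n, \hat p)$. Since the unconditional probability at $\hat p$ is below $1-3\alpha$, a pigeonhole argument produces a particular realization $H_n = G' \notin W_n$ for which the union with $G^{(r)}(n, \beta \hat p)$ is in $W_n$ with probability still below $1 - 2\alpha$, while the Bourgain local obstruction simultaneously guarantees that a random copy of $F$ on the same vertex set pushes us past $1 - \alpha$.

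The main obstacle is Bourgain's theorem itself, whose proof is a delicate discrete Fourier analytic argument establishing that monotone Boolean functions with low total influence are noise-stable and can be approximated by juntas supported on local events. Once that black box is available, the remaining work is bookkeeping: choosing $\beta$ small enough that a $\beta\hat p$-random graph cannot by itself reliably jump past the sharp window, and using the coarse-threshold interval together with the uniformity in $F$ from Bourgain's conclusion to extract a single deterministic $H_n$ enjoying both required properties.
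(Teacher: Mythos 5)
The paper does not prove Proposition~\ref{st}: it is stated with a terminal \verb|\qed| and cited directly from Friedgut's work (the original characterisation in~\cite{friedgut1}, with the concrete formulation matching~\cite{friedgut2}). So there is no in-paper proof to compare against, and you are being asked to reconstruct a deep external theorem. With that caveat, your sketch is a faithful outline of Friedgut's actual strategy: a coarse threshold gives, via Russo's formula and an averaging argument over a bounded multiplicative window, a scale $\hat p$ at which both $\mu_{\hat p}$ is bounded away from $0$ and $1$ and the total influence $p\,\mu_p'$ is $O(1)$; Bourgain's appendix theorem then produces a bounded-size booster, and a two-round exposure turns that booster into the required $H_n$.

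The one place your account is genuinely under-specified is the extraction step, and it is worth naming the gap precisely. Bourgain's theorem gives you a fixed $F$ and $\delta>0$ with $\Prob(F\subseteq G) = \Omega(1)$ such that \emph{conditioning on $F\subseteq G^{(r)}(n,\hat p)$} raises the probability of $W_n$ by $\delta$; that is an average statement about the random graph, not yet a statement that a \emph{deterministic} $H_n$ exists on which planting a \emph{single random copy} of $F$ (with no further random edges) succeeds with probability $>1-\alpha$. Passing from the conditional-boosting form to the planted form requires the additional argument that for most realizations $G'$ of the first-round graph, a constant fraction of the $\binom{n}{v(F)}$ placements of $F$ are ``useful'' for $G'$, and then intersecting that large set of $G'$ with the positive-measure set where $\Prob(G'\cup G^{(r)}(n,\beta\hat p)\in W_n)<1-2\alpha$. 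Your pigeonhole sentence asserts that this intersection is nonempty but gives no quantitative control over the second set, and in particular does not explain where the constant $\beta$ comes from or why the second-round random graph at density $\beta\hat p$ cannot by itself do the boosting that $F$ does (which is exactly what the pair of inequalities \eqref{notboost}--\eqref{boost} encode). This reduction is the real content of the formulation you are trying to reach, and while the rest of your outline is sound, this step deserves more than a pigeonhole by assertion.
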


We shall also require the Paley--Zygmund inequality.
\begin{proposition}\label{pz}
	If $X$ is a non-negative random variable, then
	\[ \Prob(X > 0) \ge \frac{\E[X]^2}{\E[X^2]}. \eqno\qed \]
\end{proposition}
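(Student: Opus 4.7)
The plan is to derive this as a direct application of the Cauchy--Schwarz inequality. Since $X \ge 0$, we have the pointwise identity $X = X \cdot \mathds{1}_{\{X > 0\}}$, so taking expectations yields $\E[X] = \E\bigl[X \cdot \mathds{1}_{\{X > 0\}}\bigr]$. Applying Cauchy--Schwarz to the pair $X$ and $\mathds{1}_{\{X > 0\}}$ then gives
\[ \E[X]^2 \;=\; \E\bigl[X \cdot \mathds{1}_{\{X > 0\}}\bigr]^2 \;\le\; \E[X^2] \cdot \E\bigl[\mathds{1}_{\{X > 0\}}^2\bigr] \;=\; \E[X^2] \cdot \Prob(X > 0), \]
whence the claim follows by dividing through by $\E[X^2]$.

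There is no real obstacle here: the entire argument is a one-line consequence of Cauchy--Schwarz. The only subtlety is the degenerate case $\E[X^2] = 0$, in which case $X = 0$ almost surely and both sides of the asserted inequality vanish, so the statement holds trivially. A standard refinement would be to split $\E[X]$ across the events $\{X \le \theta \E[X]\}$ and $\{X > \theta \E[X]\}$ for some $\theta \in [0,1)$, yielding the more general bound $\Prob\bigl(X > \theta \E[X]\bigr) \ge (1-\theta)^2 \E[X]^2 / \E[X^2]$; specialising to $\theta = 0$ recovers precisely the inequality as stated.
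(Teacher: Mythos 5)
Your proof is correct. The paper states Proposition~\ref{pz} with a $\qed$ and offers no proof, treating the Paley--Zygmund inequality as a standard fact; the Cauchy--Schwarz argument you give, writing $\E[X] = \E[X\cdot\mathds{1}_{\{X>0\}}]$ and squaring, is precisely the canonical derivation, so there is nothing to reconcile. Your remark on the degenerate case $\E[X^2]=0$ and the $\theta$-generalization are both accurate, though neither is needed for the application in the paper (which only invokes the $\theta=0$ case to show $\Prob(X>0)$ is bounded away from $0$ once $\E[X^2]=O(\E[X]^2)$).
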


Finally, we collect together some standard estimates for factorials and binomial coefficients.
\begin{proposition}\label{stirling}
	For all $n \in \N$, we have
	\[ \sqrt{2\pi n} \left(\frac{n}{\eu}\right)^n \le n! \le \eu \sqrt{ n} \left(\frac{n}{\eu}\right)^n,\]
	and for all positive integers $1\le k \le n$, we have
	\[ \binom{n}{k} \le \left(\frac{\eu n}{k}\right)^k. \eqno \qed \]
\end{proposition}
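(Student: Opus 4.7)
The plan is to deduce the factorial bounds from a monotonicity property of the sequence $a_n = n!/\bigl(\sqrt{n}(n/\eu)^n\bigr)$, and then obtain the binomial inequality as a simple consequence. A direct calculation yields
\[
\frac{a_{n+1}}{a_n} \;=\; \frac{\eu}{(1+1/n)^{n+1/2}},
\]
and since the sequence $(1+1/n)^{n+1/2}$ is classically known to decrease monotonically to $\eu$ from above, we conclude that $a_{n+1}/a_n < 1$ for every $n$, so $a_n$ itself is monotonically decreasing.

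For the upper bound on $n!$ I would simply check that $a_1 = \eu$ and invoke monotonicity to deduce $a_n \le \eu$ for all $n \ge 1$, which is equivalent to $n! \le \eu\sqrt{n}(n/\eu)^n$. For the lower bound I would identify $\lim_{n \to \infty} a_n = \sqrt{2\pi}$ via the Wallis product formula (or equivalently via a Gaussian integral / saddle-point analysis of the gamma function); combined with the fact that $a_n$ is decreasing, this forces $a_n \ge \sqrt{2\pi}$ for every $n$, whence $n! \ge \sqrt{2\pi n}(n/\eu)^n$.

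For the binomial inequality, the crude bound $\binom{n}{k} \le n^k/k!$ together with $k! \ge (k/\eu)^k$ immediately gives $\binom{n}{k} \le (\eu n/k)^k$. The inequality $k! \ge (k/\eu)^k$ is itself elementary: comparing $k!$ with the $k$-th term of the Taylor series $\eu^k = \sum_{j \ge 0} k^j/j!$ yields $\eu^k \ge k^k/k!$, which rearranges to the required bound. The one genuinely nontrivial ingredient in the whole argument is the identification of the Stirling constant $\sqrt{2\pi}$, which requires the Wallis product or an equivalent analytic identity; all the remaining manipulations are purely algebraic, which is why a paper of this type is content to invoke the estimates without proof.
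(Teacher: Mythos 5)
Your argument is correct in all details: the ratio computation $a_{n+1}/a_n = \eu/(1+1/n)^{n+1/2}$ checks out, the monotone decrease of $(1+1/n)^{n+1/2}$ to $\eu$ (which one can see, for instance, by observing that $\log(1+1/x)=\int_x^{x+1}dt/t$ lies below the trapezoidal estimate $\tfrac12(1/x+1/(x+1))$ since $1/t$ is convex) gives that $a_n$ decreases, the endpoint value $a_1=\eu$ supplies the upper bound, the Wallis-product limit $\sqrt{2\pi}$ supplies the lower bound, and the binomial inequality then drops out of $\binom{n}{k}\le n^k/k!$ combined with the Taylor-series bound $k!\ge (k/\eu)^k$. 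Note, however, that the paper does not prove this proposition at all: it is stated with a terminal $\square$ as a standard estimate and invoked directly, so there is no ``paper proof'' to compare against. Your write-up is a clean and self-contained justification of the cited estimates, and your own closing remark correctly identifies the single non-elementary step (pinning down the constant $\sqrt{2\pi}$) as the reason such statements are conventionally left unproved in papers of this kind.
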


\section{Proof of the main result}\label{proof}
In this section, we shall prove Theorem~\ref{mainthm}. We begin by setting up some notational conventions that we shall adhere to in the sequel.

In what follows, we fix $r, \ell \in \N$ with $r > \ell > 1$, set $s = r - \ell$, take $t$ to be the unique integer satisfying $t = r \imod{s}$ with $1 \le t \le s$, and set $\lambda = t!(s-t)!$. We shall henceforth assume that $n$ is a large integer divisible by $s$, and we set $m = n / s$ so that $m$ is the number of edges in an $\ell$-cycle on $n$ vertices. Finally, all $r$-graphs on $n$ vertices in the sequel will implicitly be assumed to be on the vertex set $[n] = \{1, 2, \dots, n\}$.

To deal with $r$-graph cycles on the vertex set $[n]$, we shall define an equivalence relation on $S_n$, the symmetric group of permutations of $[n] = \{1, 2,\dots, n\}$; we shall ignore the group structure of $S_n$ for the most part, so for us, a permutation $\sigma \in S_n$ is just an arrangement $\sigma(1), \sigma(2), \dots, \sigma(n)$ of the elements of $[n]$ (namely, vertices), at locations indexed by $[n]$.

We divide $[n]$ into $m$ \emph{blocks} of size $s$, where for $0\le i < m$, the $i$-th such block is comprised of the interval $\{is+1, is+2, \dots, is + s\}$ of vertices, and we further divide each such block into two \emph{subblocks}, where the $t$-subblock of a block consists of the first $t$ vertices in the block, and the $(s-t)$-subblock of a block consists of the last $s-t$ vertices in the block. Now, define an equivalence relation on $S_n$ by saying that two permutations $\sigma$ and $\tau$ are \emph{subblock equivalent} if $\tau$ may be obtained from $\sigma$ by only rearranging vertices within subblocks; in other words, an equivalence class of this equivalence relation may be viewed as an element of the quotient $Q_n = S_n / (S_t \times S_{s-t})^m$.

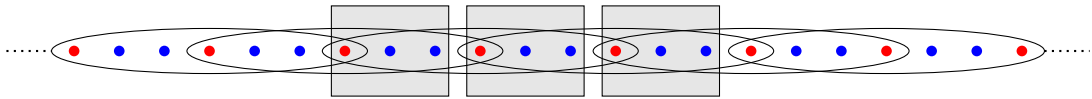
\begin{figure}
	\begin{center}
		\trimbox{0cm 0cm 0cm 0cm}{
			\begin{tikzpicture}[scale = 0.6]
				\draw[fill = gray!20] (5.7,-1) rectangle (8.3,1);
				\draw[shift={(3,0)},fill = gray!20] (5.7,-1) rectangle (8.3,1);
				\draw[shift={(6,0)}, fill = gray!20] (5.7,-1) rectangle (8.3,1);
				\foreach \x in {0,1,2,3,4,5,6,7}
				\node at (3*\x, 0) [inner sep=0.5mm, circle, fill=red!100] {};
				\foreach \x in {0,1,2,3,4,5,6}
				\node at (3*\x+1, 0) [inner sep=0.5mm, circle, fill=blue!100] {};
				\foreach \x in {0,1,2,3,4,5,6}
				\node at (3*\x+2, 0) [inner sep=0.5mm, circle, fill=blue!100] {};
				\foreach \x in {0,3,6,9,12,15}
				\draw[shift={(\x,0)}] (3,0) ellipse (3.5cm and 0.5cm);
				\draw[dotted, thick] (-1.5,0)--(-0.5,0);
				\draw[dotted, thick] (21.5,0)--(22.5,0);
			\end{tikzpicture}
		}
	\end{center}
	\caption{Blocks and subblocks of a $7$-uniform $4$-cycle.}\label{pic:blockstruc}
\end{figure}
The definition of the above equivalence relation is motivated by the natural $\ell$-cycle associated with a permutation: given $\sigma \in S_n$, consider the $r$-graph $H_\sigma$ on $[n]$ with $m$ edges, where for $0\le i < m$, the $i$-th edge of $H_\sigma$ is the $r$-set $\{\sigma(is+1), \sigma(is+2), \dots, \sigma(is + r)\}$, of course with indices being considered cyclically modulo $n$. It is easy to verify both that $H_\sigma$ is an $\ell$-cycle for each $\sigma \in S_n$, and that if $\sigma$ and $\tau$ are subblock equivalent, then $H_\sigma = H_\tau$; see Figure~\ref{pic:blockstruc} for an illustration. Hence, in what follows, we shall abuse notation and call the elements of $Q_n$ permutations (when strictly speaking, they are equivalence classes of permutations), and for $\sigma \in Q_n$, we write $H_\sigma$ for the natural $\ell$-cycle associated with $\sigma$.

We parameterise $p = Cp^*_{r,l}(n) = C \lambda \eu^s / n^s$ for some constant $C > 0$, and work with $G = \Gr(n,p)$, where we take the vertex set of $G$ to be $[n]$. Therefore, our goal is to show that $G$ contains a Hamiltonian $\ell$-cycle with high probability when $C > 1$ (namely, the $1$-statement), and that $G$ does not contain a Hamiltonian $\ell$-cycle with high probability when $C < 1$ (namely, the $0$-statement).

In what follows, constants suppressed by asymptotic notation are allowed to depend on fixed parameters (quantities depending only on $r$, $\ell$, $C$, etc.) but not on variables that depend on $n$, which we send to infinity along multiples of $s$.  We also adopt the standard convention that an event holds with high probability if the probability of the event in question is $1-o(1)$ as $n \to \infty$.

We shall focus our attention on the random variable $X$ that counts the number of $\sigma \in Q_n$ for which the $\ell$-cycle $H_\sigma$ is contained in $G$, noting that $G$ contains a Hamiltonian $\ell$-cycle if and only if $X > 0$.

We start by computing the first moment of $X$.

\begin{lemma}\label{expectation}
	We have $\E[X] = |Q_n|p^m = n! (p / \lambda)^m$, so that
	\[
		\E[X] \longrightarrow
		\begin{cases}
			\infty & \mbox{if } C > 1, \text{ and} \\
			0      & \mbox{if } C < 1.             \\
		\end{cases}
	\]
\end{lemma}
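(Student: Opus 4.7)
The plan is to prove the identity $\E[X] = |Q_n| p^m$ by a direct application of linearity of expectation, then simplify $|Q_n|$ using the definition of the quotient $Q_n = S_n / (S_t \times S_{s-t})^m$, and finally extract the two asymptotic regimes via the Stirling-type bounds of Proposition~\ref{stirling}.

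First I would write $X = \sum_{\sigma \in Q_n} \mathds{1}[H_\sigma \subseteq G]$ and observe that for each $\sigma \in Q_n$, the $\ell$-cycle $H_\sigma$ has exactly $m$ edges, which are pairwise distinct (any two edges of $H_\sigma$ differ in at least $s = r-\ell \ge 1$ vertices by construction, so they are distinct $r$-sets). Since each $r$-set appears in $G$ independently with probability~$p$, we have $\Prob(H_\sigma \subseteq G) = p^m$, and linearity yields $\E[X] = |Q_n| p^m$. Next I would compute $|Q_n| = |S_n|/|(S_t \times S_{s-t})^m| = n!/(t!(s-t)!)^m = n!/\lambda^m$, which gives the closed form $\E[X] = n!(p/\lambda)^m$.

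For the asymptotics, I would substitute $p = C\lambda \eu^s/n^s$ and $m = n/s$ to obtain
\[ \E[X] = n! \left(\frac{C \eu^s}{n^s}\right)^{n/s} = C^{n/s}\, \frac{n!\, \eu^n}{n^n}. \]
Applying the two-sided bound $\sqrt{2\pi n}(n/\eu)^n \le n! \le \eu\sqrt{n}(n/\eu)^n$ from Proposition~\ref{stirling} then sandwiches $n!\eu^n/n^n$ between $\sqrt{2\pi n}$ and $\eu\sqrt n$, so that $\E[X] = \Theta(\sqrt{n}\, C^{n/s})$. When $C > 1$ the exponential factor dominates and $\E[X] \to \infty$, while when $C < 1$ the exponential decay beats the $\sqrt n$ growth and $\E[X] \to 0$.

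There is really no substantial obstacle here: the only thing to be careful about is the accounting in the definition of $Q_n$ (verifying that two subblock-equivalent permutations genuinely produce the same $\ell$-cycle, so that one is not over- or under-counting cycles when summing over $\sigma \in Q_n$). The block/subblock picture in Figure~\ref{pic:blockstruc} and the fact that each edge of $H_\sigma$ is the union of a $t$-subblock and an $(s-t)$-subblock together with one full intermediate block make this transparent, and the rest is a direct Stirling calculation.
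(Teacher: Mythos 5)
Your proof is correct and takes essentially the same route as the paper: compute $|Q_n| = n!/\lambda^m$, apply linearity of expectation, substitute $p = C\lambda\eu^s/n^s$ and $m = n/s$, and finish with the Stirling bounds from Proposition~\ref{stirling} to get $\E[X] = \Theta(\sqrt{n}\,C^{n/s})$. The paper gives this as a one-line sketch; you have simply filled in the routine details, including the (correct, if slightly imprecisely justified) observation that the $m$ edges of $H_\sigma$ are pairwise distinct so that $\Prob(H_\sigma \subseteq G) = p^m$.
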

\begin{proof}
	This follows from noting that $|Q_n| = n! / \lambda^m$, estimating $n!$ using Proposition~\ref{stirling}, and using the fact that $n = ms$.
\end{proof}

In particular, the above first moment estimate, combined with Markov's inequality, establishes the $0$-statement. To establish the $1$-statement, the following second moment estimate will be crucial.

\begin{lemma}\label{variance}
	For $C > 1$, we have $\E[X^2] = O(\E[X]^2)$.
\end{lemma}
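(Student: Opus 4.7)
The plan is to expand the second moment by organizing pairs $(\sigma, \tau) \in Q_n^2$ according to their cycle intersection $E(H_\sigma) \cap E(H_\tau)$. Writing
\[ \E[X^2] = \sum_{\sigma, \tau \in Q_n} p^{|E(H_\sigma) \cup E(H_\tau)|} = p^{2m} \sum_{k = 0}^{m} M_k \, p^{-k}, \]
where $M_k$ counts ordered pairs with $|E(H_\sigma) \cap E(H_\tau)| = k$, and recalling $\E[X]^2 = |Q_n|^2 p^{2m}$, the goal is to prove $\sum_{k \ge 0} (M_k/|Q_n|^2) \, p^{-k} = O(1)$. Since $M_0 \le |Q_n|^2$, the $k = 0$ term contributes at most $1$, so the core task is to control the $k \ge 1$ terms.

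For $k \ge 1$, the shared edges form $j$ maximal arcs in $H_\sigma$ (with $1 \le j \le k$), and I would split $M_k \le \sum_{j = 1}^{k} \tilde M_{k, j}$, where $\tilde M_{k, j}$ is the count of pairs together with a choice of $j$ specific arcs in $H_\sigma$ of total length $k$ that are required to lie in $H_\tau$. Fixing $\sigma$ by transitivity, the enumeration of admissible $\tau$ splits into three stages: (i) selecting positions and lengths of the $j$ arcs in $H_\sigma$, a cyclic-composition count of order $\frac{m}{j}\binom{k - 1}{j - 1}\binom{m - k - 1}{j - 1}$; (ii) cyclically arranging and orienting the arcs as contiguous sub-paths of $H_\tau$, contributing at most $(j - 1)! \, 2^j$ choices; and (iii) filling the $j$ gaps with $\ell$-paths on the remaining $n - ks - j\ell$ vertices, which is essentially a Hamilton $\ell$-path count on a smaller effective vertex system inside the subblock quotient $Q_n$.

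Multiplying these pieces, substituting $p = C\lambda(\eu/n)^s$ and $|Q_n| = n!/\lambda^m$, and using the Stirling estimates of Proposition~\ref{stirling}, the factorial and $\lambda$-power terms balance against $|Q_n|^{-2}$, and $(\tilde M_{k, j}/|Q_n|^2)\,p^{-k}$ collapses (in the relevant regime $k = o(n)$) to a quantity that decays geometrically in $k$ at rate $C^{-1}$, multiplied by only sub-exponential factors in $k$ and $j$. Summing over $1 \le j \le k$ and $k \ge 1$ then yields a convergent series since $C > 1$, establishing $\E[X^2] = O(\E[X]^2)$. The main obstacle is the bookkeeping in step (iii): each gap boundary is partially rigidified by the flanking arcs, while the gap interior inherits the full subblock symmetries of $Q_n$, so the count must be carried out in a way that makes the $\lambda$ factors cancel exactly. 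Any spurious factor of $\lambda^{\Theta(k)}$ would destroy the geometric decay, and ensuring this cancellation is what makes working in the quotient $Q_n$, rather than in $S_n$, pay off.
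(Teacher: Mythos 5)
Your setup matches the paper's: both expand $\E[X^2]$ by the size of $E(H_\sigma)\cap E(H_\tau)$, split off the $k=0$ term via $M_0\le |Q_n|^2$, and then try to show the remaining sum is $O(1)$ by a union bound over the combinatorial structure of the intersection. The three-stage enumeration you sketch for the $j$ arcs is, in spirit, the paper's count of ``$(b,a)$-configurations'' that lie behind its Claim~3.3. The key observation that one must work in the quotient $Q_n$ so that the $\lambda^{\Theta(m)}$ factors cancel is also correct and is indeed what makes the expectation threshold visible.

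However, there is a genuine gap: your decomposition into $j$ ``maximal arcs'' of $H_\sigma$ implicitly assumes those arcs are vertex-disjoint, and this fails precisely in the regime the lemma is designed to handle. Two maximal runs of shared edges in $H_\sigma$ separated by a gap of $g$ missing edges still share $\max(0,\ell-gs)$ vertices, which is positive whenever $\ell > gs$. So for $\ell > s$ (for instance, tight cycles, $s=1$, $\ell=r-1$), your ``arcs'' can overlap, and the vertex count $n - ks - j\ell$ in stage~(iii) is simply wrong; the embedding count in stage~(ii) is also no longer independent across arcs, since overlapping arcs in $H_\sigma$ must share those same vertices in $H_\tau$. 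Your proposal therefore establishes only the ``canonical'' half of the estimate. The paper separates the sum into a canonical part $\Gamma_c$ (where $H_\sigma\cap H_\tau$ is a union of vertex-disjoint $\ell$-paths --- this is exactly what your three stages compute) and a non-canonical remainder $\Gamma'$, and then bounds $\Gamma'$ by a comparison argument: it groups the non-canonical intersections by their \emph{minimal covering configuration} (the smallest disjoint union of $\ell$-paths covering the ``weak path'' components) and shows, with a careful rigidity analysis, that each such class contributes at most a constant factor more than the corresponding canonical class. This non-canonical step is the actual crux --- it is exactly why a naive second moment gives only $\E[X^2]=O(\E[X]^2)$ rather than $(1+o(1))\E[X]^2$, as the paper notes for $r=3,\ \ell=2$ --- and your proposal does not address it.
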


Let us point out that Lemma~\ref{variance} does not make the stronger promise that \[\E[X^2] = (1+o(1))\E[X]^2\,,\] and indeed, such an estimate does not hold generally for an arbitrary pair of integers $r > \ell > 1$.

\begin{proof}[Proof of Lemma~\ref{variance}]
	To estimate the second moment of $X$, it will be convenient to make the following definition: for $0 \le b \le m$, let $N(b)$ denote, for any fixed permutation $\sigma \in Q_n$, the number of permutations $\tau \in Q_n$ meeting $\sigma$ in $b$ edges, by which we mean that $H_\tau$ intersects $H_\sigma$ in exactly $b$ edges. With this definition in place, using the trivial fact that $N(0) \le |Q_n|$, we have
	\begin{align*}
		\E[X^2] & = \sum_{\sigma, \tau \in Q_n} \Prob (H_\sigma \cup H_\tau \subset G) \\
		        & = |Q_n|p^m \sum_{b = 0}^m N(b) p^{m-b}                               \\
		        & \le |Q_n|^2p^{2m} + |Q_n|p^m \sum_{b = 1}^m N(b) p^{m-b}             \\
		        & =\E[X]^2 + \E[X]\sum_{b = 1}^m N(b) p^{m-b},
	\end{align*}
	whence it follows that
	\[
		\frac{\E[X^2] }{ \E[X] ^2} \le 1 + \sum_{b = 1}^m \frac{N(b)p^{-b}} {|Q_n|},
	\]
	so to prove Lemma~\ref{variance}, it suffices to show that the sum
	\[\Gamma = \sum_{b = 1}^m \frac{N(b)p^{-b}}{ |Q_n|} \]
	satisfies the estimate $\Gamma = O(1)$ when $C > 1$.

	The rough plan of attack now is similar to that adopted by Dudek and Frieze~\cite{tight}, but we shall require a more careful two-stage analysis since we require stronger estimates: first, we shall control the `canonical' contributions to $\Gamma$, and subsequently bound the `non-canonical' contributions in terms of the aforementioned `canonical' ones; we make precise these notions below.

	An $r$-graph is called an \emph{$\ell$-path} if there exists a linear ordering of its vertices such that each of its edges consists of $r$ consecutive vertices, and such that every pair of consecutive edges (in the natural ordering of the edges) intersect in precisely $\ell$ vertices. Given a permutation $\sigma \in Q_n$, we say that $\tau \in Q_n$ meets $\sigma$ \emph{canonically} if $H_\tau$ meets $H_\sigma$ in a family of vertex-disjoint $\ell$-paths, and we otherwise say that $\tau$ meets $\sigma$ \emph{non-canonically}.

	For $1 \le b \le m$, let $N_c(b)$ to be the number of permutations $\tau \in Q_n$ which canonically meet a fixed permutation $\sigma \in Q_n$ in $b$ edges, set $N'(b) = N(b) - N_c(b)$, and decompose $\Gamma = \Gamma_c + \Gamma'$, where naturally
	\[\Gamma_c  =  \sum_{b = 1}^m \frac{N_c(b)p^{-b}} {|Q_n|}\] and
	\[\Gamma'  = \Gamma - \Gamma_c = \sum_{b = 1}^m \frac{N'(b)p^{-b} }{ |Q_n|}.\]

	First, we bound the canonical contributions to $\Gamma$.

	\begin{claim}\label{canonical}
		For $C > 1$, we have $\Gamma_c = O(1)$.
	\end{claim}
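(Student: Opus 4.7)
The plan is to stratify canonical meetings by their combinatorial type and then bound the resulting sum as a geometric series in the intersection size. Every canonical meeting consists of $j \ge 1$ shared vertex-disjoint $\ell$-paths with edge-counts $b_1, \ldots, b_j \ge 1$ summing to $b$; I would accordingly bound $N_c(b)$ by summing over all such types.

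For a fixed type $(j; b_1, \ldots, b_j)$, the count of $\tau \in Q_n$ realizing it factors as a product of three contributions: (i) the choice of positions of the $j$ vertex-disjoint paths inside the cycle $H_\sigma$, which I would bound by roughly $m^j \binom{b-1}{j-1}$ (approximately $m$ options per starting position, times the compositions of $b$ into $j$ positive parts, with the vertex-disjointness imposing minimum edge-gaps between consecutive paths); (ii) the choice of cyclic order, orientations, and positions of these paths inside $H_\tau$, contributing at most $(j-1)! \cdot 2^j \cdot m^j$; and (iii) the arrangement of the remaining $n - bs - j\ell$ non-shared vertices across the leftover subblocks of $\tau$, which yields a factor proportional to $(n - bs - j\ell)!/\lambda^{m-b}$. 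The clean form of (iii) hinges on the fact that the boundary between a shared path and a gap region of $H_\tau$ is aligned with a $t$-subblock boundary, so the $\lambda^m$ in the denominator of $|Q_n|$ splits multiplicatively according to the path/gap decomposition.

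Dividing by $|Q_n| = n!/\lambda^m$, multiplying by $p^{-b}$ with $p = C\lambda \eu^s / n^s$, and using Stirling's inequality (Proposition~\ref{stirling}) to replace $n!/(n - bs - j\ell)!$ by roughly $(n/\eu)^{bs + j\ell}$ up to polynomial corrections, the leading $n$-dependence should cancel and leave a bound of the form
$$
\frac{N_c(b)\,p^{-b}}{|Q_n|} \le \sum_{j=1}^{b} \binom{b-1}{j-1}\, K^j\, C^{-b}
$$
for some constant $K = K(r, \ell)$ independent of $n$, absorbing the leftover orientation, cyclic, and polynomial factors. The inner sum is at most $K(K+1)^{b-1}$, so summing over $b$ would give a geometric series in $b$ with ratio $(K+1)/C$, yielding $\Gamma_c = O(1)$ provided $C$ exceeds a suitable threshold.

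The main obstacle will be tightening all constants so that the bound is valid for every $C > 1$, not merely for $C$ above some larger constant. The key ingredient is that the definition $\lambda = t!(s-t)!$ is precisely calibrated to the subblock quotient: the internal subblock rearrangements inside the shared-path regions should produce a factor of $\lambda^b$ that cancels exactly against the corresponding part of the $\lambda^m$ in $|Q_n|$. Any slack in this accounting would leave an uncanceled constant raised to the power $b$, and the bound $(K+1)/C < 1$ would then fail near $C = 1$. Handling this requires careful bookkeeping of how the $t$- and $(s-t)$-subblock structures at each path endpoint interact with the adjacent gap region, and this is where I expect the bulk of the technical work to lie.
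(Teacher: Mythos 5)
Your overall plan mirrors the paper's: stratify canonical intersections by the number $j$ of vertex-disjoint $\ell$-paths, count $(b,j)$-configurations and their embeddings, and sum a geometric-like series. The $\lambda$-bookkeeping you worry about is in fact benign --- the factor $\lambda^m/\bigl(\lambda^{m-b-\ell j}\lambda^b\bigr)=\lambda^{\ell j}$ is only $\exp(O(j))$ and is absorbed harmlessly --- so that is not where the difficulty lies.

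The genuine gap is in steps (i) and (ii), where you lose the factorial denominators that make the argument work for every $C>1$ rather than only for $C$ above an absolute constant. You bound the number of starting positions in $H_\sigma$ by $m^j$ and the number of ordered embeddings into $H_\tau$ by $(j-1)!\cdot m^j$; the paper instead uses $\binom{m}{j}$ for the first and $\binom{m}{j}\,j!\le m^{\underline{j}}$ for the second. This costs you roughly a factor $j!\cdot(j-1)!\asymp j^{2j}$ in the denominator. After dividing by $|Q_n|$ and multiplying by $p^{-b}$, the paper's per-$(b,j)$ contribution is $\exp(O(j))\,b^j/j^{2j}\cdot C^{-b}$, whose maximum over $1\le j\le b$ is attained near $j\asymp\sqrt{b}$ and is $\exp\bigl(O(\sqrt{b})\bigr)=\exp(o(b))$; summing over $b$ then gives $\sum_b C^{-b+o(b)}=O(1)$ for every $C>1$. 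Your bound, lacking the $1/j^{2j}$, yields $\sum_j\binom{b-1}{j-1}K^j\asymp(1+K)^b$, and the geometric series $\sum_b\bigl((1+K)/C\bigr)^b$ requires $C>1+K$. You correctly flag that something must be tightened, but the fix is not sharper $\lambda$-accounting: it is replacing $m^j$-type overcounts by binomial coefficients $\binom{m}{j}$ so that the $1/(j!)^2$ survives, which is precisely what converts the sum over $j$ from $\exp(\Theta(b))$ to $\exp(o(b))$.

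One further small point: the contribution of the uncovered vertices should be $(n-sb-\ell j)!/\lambda^{m-b-\ell j}$, not $(n-sb-\ell j)!/\lambda^{m-b}$, because an embedded configuration can meet up to $b+\ell j$ blocks of $\tau$, and only the fully uncovered blocks contribute to the subblock quotient. Your reasoning that the path/gap boundary aligns with a $t$-subblock boundary is not correct in general (an $\ell$-path covers $sb'+\ell$ vertices, which need not be a multiple of $s$). Fortunately this discrepancy is only a factor $\lambda^{\ell j}=\exp(O(j))$ and is harmless, but the justification you give for it does not hold.
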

	\begin{proof}
		Fix a permutation $\sigma \in Q_n$ and for $1 \le a \le b$, write $N_c (b, a)$ for the number of permutations $\tau \in Q_n$ which meet $\sigma$ canonically in $b$ edges which together form $a$ vertex-disjoint $\ell$-paths in $H_\sigma$. We now proceed to estimate $N_c (b,a)$.

		Given $\sigma$, a $(b,a)$-configuration in $\sigma$ is a collection of $b$ edges in~$H_\sigma$ which together form~$a$ vertex-disjoint $\ell$-paths; clearly, a $(b,a)$-configuration covers $sb + \ell a$ vertices. The number of ways to choose a $(b,a)$-configuration in $\sigma$ is clearly at most \[\binom{m}{a} \binom{b}{a},\] since there are at most $\binom{m}{a}$ ways of locating the leftmost edge in each of the $a$ $\ell$-paths in~$H_\sigma$, and the number of ways to subsequently choose the number of edges in each of these $a$ paths so that there are $b$ edges in total is clearly at most the number of solutions to the equation $x_1 + x_2 + \dots + x_a = b$ over the positive integers, which is $\binom{b-1}{a-1} \le \binom{b}{a}$.

		Next, given a $(b,a)$-configuration $\CP$ in $\sigma$, let us count the number of choices for $\tau \in Q_n$ for which $H_\tau$ contains $\CP$. We do this in two steps. First, we count the number of ways in which the vertices covered by $\CP$ can be embedded into $\tau$, and then estimate the number of ways in which the vertices not covered by $\CP$ can be ordered in $\tau$, ensuring at all times that we only count up to subblock equivalence.

		\begin{figure}
			\begin{center}
				\trimbox{0cm 0cm 0cm 0cm}{
					\begin{tikzpicture}[scale = 0.6]
						\draw[thick,right hook-latex] (0,1.5) --(4,1.5);
						\draw[fill = gray!20] (5.7,-1) rectangle (14.3,1);
						\foreach \x in {0,1,2,3,4,5,6,7}
						\node at (3*\x, 0) [inner sep=0.5mm, circle, fill=red!100] {};
						\foreach \x in {0,1,2,3,4,5,6}
						\node at (3*\x+1, 0) [inner sep=0.5mm, circle, fill=blue!100] {};
						\foreach \x in {0,1,2,3,4,5,6}
						\node at (3*\x+2, 0) [inner sep=0.5mm, circle, fill=blue!100] {};
						\foreach \x in {0,3,6,9,12,15}
						\draw[shift={(\x,0)}] (3,0) ellipse (3.5cm and 0.5cm);

						\draw[shift={(0,-5)},thick,left hook-latex] (21,1.5) --(17,1.5);
						\draw[shift={(1,-5)},fill = gray!20] (5.7,-1) rectangle (14.3,1);
						\foreach \x in {0,1,2,3,4,5,6,7}
						\node at (3*\x, -5) [inner sep=0.5mm, circle, fill=red!100] {};
						\foreach \x in {0,1,2,3,4,5,6}
						\node at (3*\x+1, -5) [inner sep=0.5mm, circle, fill=blue!100] {};
						\foreach \x in {0,1,2,3,4,5,6}
						\node at (3*\x+2, -5) [inner sep=0.5mm, circle, fill=blue!100] {};
						\foreach \x in {0,3,6,9,12,15}
						\draw[shift={(\x,-5)}] (3,0) ellipse (3.5cm and 0.5cm);
					\end{tikzpicture}
				}
			\end{center}
			\caption{The rigid interior of a $7$-uniform $4$-path in the two possible directions of embedding.}\label{pic:rigid}
		\end{figure}
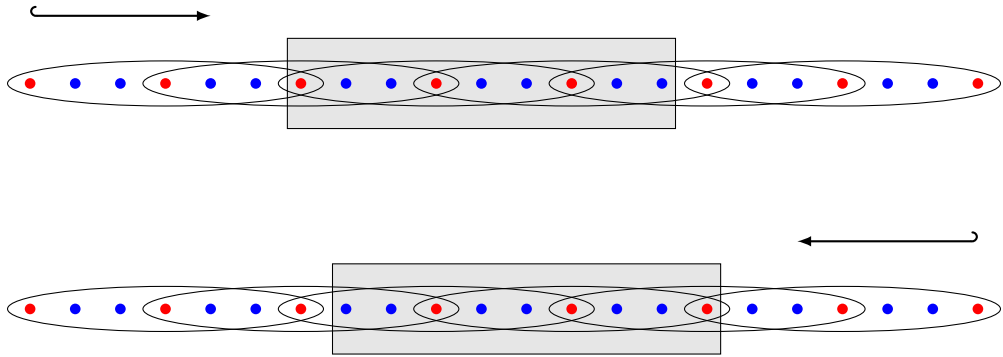
		Now, there are at most $a! \binom{m}{a}$ ways to choose the starting blocks of the leftmost edges of the $a$ distinct $\ell$-paths of $\CP$ in $\tau$. Once the left endpoint of one of these $\ell$-paths has been fixed in $\tau$, we observe that there are only $O(1)$ ways, up to subblock equivalence, to embed the remaining vertices of this $\ell$-path into $\tau$; indeed, the relative ordering of all the vertices in an $\ell$-path, with $O(1)$ exceptions at the left and right extremes, is rigid up to subblock equivalence, up to a reversal of the direction of embedding (left-to-right or right-to-left), as shown in Figure~\ref{pic:rigid}. Consequently, once the location of the $a$ leftmost edges have been determined in $\tau$, the number of ways of embedding the rest of $\CP$ into $\tau$ is at most $L^a$ for some $L = L(r,\ell)$. We conclude that the number of ways to embed $\CP$ is at most
		\[ \binom{m}{a} a! L^a.\]

		Once we have embedded $\CP$ into $\tau$, there are $(n-sb-\ell a)!$ ways to arrange the remaining vertices uncovered by $\CP$, without accounting for subblock equivalence. It is easy to check that any embedding of $\CP$ covers at most $b + \ell a$ blocks in $\tau$, so the number of choices for~$\tau \in Q_n$ with a given embedding of $\CP$ is at most
		\[ \frac{(n-sb-\ell a)!}{\lambda^{m-b-\ell a}}. \]

		From the above estimates and using Proposition~\ref{stirling} to bound binomial coefficients, we conclude that
		\begin{align}\label{canoncount}
			N_c (b,a) & \le \binom{m}{a} \binom{b}{a} \binom{m}{a} a! L^a \frac{(n-sb-\ell a)!}{\lambda^{m-b-\ell a}} \nonumber \\
			          & \le \exp{(O(a))} \frac{n^{2a}b^a}{a^{2a}} \frac{(n-sb-\ell a)!}{\lambda^{m-b}},
		\end{align}
		where, as remarked upon before, constants suppressed by the asymptotic notation depend only on $r$ and $\ell$.

		To finish the proof of the lemma, we now use, in order, the above bound~\eqref{canoncount}, the fact that $\ell \ge 2$, Proposition~\ref{stirling}, and the fact that $1+x \le \eu^x$ for all $x \in \R$ to show that
		\begin{align*}
			\Gamma_c & =  \sum_{b = 1}^m \sum_{a = 1}^b \frac{N_c(b,a)p^{-b}}{|Q_n|}                                                                                                         \\
			         & \le \sum_{b = 1}^m \sum_{a = 1}^b \exp{(O(a))} \frac{n^{2a}b^a}{a^{2a}} \frac{(n-sb-\ell a)! }{\lambda^{m-b}} \frac{\lambda^m}{n!} \frac{n^{sb}}{C^b\lambda^b \eu^{sb}} \\
			         & \le  \sum_{b = 1}^m \sum_{a = 1}^b C^{-b}\exp{(O(a))} \frac{n^{2a}b^a}{a^{2a}} \frac{(n-sb-2a)! }{n!} \frac{n^{sb}}{ \eu^{sb}}                                          \\
			         & \le  \sum_{b = 1}^m \sum_{a = 1}^b C^{-b}\exp{(O(a))} \frac{n^{sb + 2a}b^a}{a^{2a}} \frac{(n-sb-2a)^{n -sb-2a} }{n^n}                                                 \\
			         & \le  \sum_{b = 1}^m  \sum_{a = 1}^b C^{-b}\exp{(O(a))} \frac{b^a}{a^{2a}} \left( 1 -  \frac{sb+2a}{n} \right)^{n -sb-2a}                                              \\
			         & \le  \sum_{b = 1}^m \sum_{a = 1}^b C^{-b} \exp{(a\log b - 2a \log a -sb -2a + (sb + 2a)^2/n + O(a) )}.
		\end{align*}
		We uniformly have $(4a^2 + 4sab)/n = O(n)$, since $a \le b \le m \le n$, so the above estimate reduces to
		\begin{equation}\label{est1}
			\Gamma_c \le  \sum_{b = 1}^m \left( C^{-b} \exp{(-sb + (sb)^2/n)} \sum_{a = 1}^b \exp{(a\log b - 2a \log a + O(a) )}\right).
		\end{equation}
		Finally, since $sb \le sm = n$, we uniformly have \[ \exp{(-sb + (sb)^2/n)} \le 1\] for all $1 \le b \le m$, and it is straightforward to verify that we uniformly have
		\[\exp{(a\log b - 2a \log a + O(a) )} = \exp(o(b))\] for all $1 \le a \le b$. Using these two bounds in~\eqref{est1}, it follows that for $C > 1$, we have
		\[ \Gamma_c \le \sum_{b = 1}^m  C^{-b} b\exp(o(b)) = \sum_{b = 1}^m C^{-b + o(b)} = O(1), \]
		proving the claim.
	\end{proof}

	The second and final step in the proof of Lemma~\ref{variance} is to estimate the non-canonical contributions to $\Gamma$.
	\begin{claim}\label{non-canonical}
		For $C > 1$, we have $\Gamma' = O(1)$.
	\end{claim}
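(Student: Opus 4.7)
The plan is to control $\Gamma'$ by leveraging the bound already established in Claim~\ref{canonical}: I will show that every non-canonical meeting is obtained from some canonical configuration by introducing a small number of \emph{defects}, each of which costs a factor of $\Omega(n)$ in the embedding count and thus contributes a factor of $O(1/n)$ to $\Gamma'$. Summed over the number of defects, this will give $\Gamma' = O(1/n)$, which is far stronger than needed.

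First, I would classify the failures of canonicity. If $H_\sigma \cap H_\tau$ is not a disjoint union of vertex-disjoint $\ell$-paths, then some local anomaly must appear: for example, two shared edges that are consecutive in $H_\sigma$ but appear in reversed or non-adjacent positions within $H_\tau$, two otherwise disjoint path components sharing a vertex, a shared edge adjacent to three or more others in the intersection, or two shared edges whose common vertex set has the wrong cardinality for being $\ell$-path neighbours. By a compactness argument that depends only on $r$ and $\ell$, each such defect is local to $O(1)$ blocks, and there are only finitely many defect types.

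Next, I would parameterise non-canonical meetings by a triple $(b,a,d)$ with $d\ge 1$, where $d$ is the minimum number of shared edges whose removal from $H_\sigma\cap H_\tau$ renders the remaining meeting canonical. One then embeds the canonical backbone into $\tau$ using the bounds from Claim~\ref{canonical}, and attaches the $d$ defective edges one at a time; each such attachment is constrained to produce a specific coincidence between vertex positions in $\sigma$ and $\tau$ that is absent in canonical meetings, and so costs a factor of $O(1/n)$ on the combinatorial count. Folding the resulting estimate
\[
N'(b;a,d) \le L^{O(d)}\, n^{-d} \binom{m}{a}\binom{b}{a} a!\, L^a \frac{(n-sb-\ell a)!}{\lambda^{m-b-\ell a}}
\]
into the telescoping computation that concluded Claim~\ref{canonical} and summing over $d\ge 1$ yields
\[
\Gamma' \le \sum_{d\ge 1} L^{O(d)} n^{-d} \Gamma_c = O(1/n).
\]

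The main obstacle is to make the notion of defect precise enough to enumerate non-canonical meetings without double counting, and to verify the $\Omega(n)$ saving in every case. Two non-canonical meetings with different defect descriptions but the same underlying structure must be identified, and defects in close proximity to one another (or near the boundary of an $\ell$-path component, where the endpoint flexibility from Figure~\ref{pic:rigid} is already in play) must not be allowed to absorb each other's rigidity. I would resolve both points by fixing a canonical backbone for each non-canonical meeting — for instance, by iteratively removing the lexicographically first defective edge until the meeting is canonical — and by absorbing any $O(1)$ boundary adjustments into the constant $L$. With these accounting choices in place, the $n^{-d}$ savings are genuine, and the sum telescopes just as in Claim~\ref{canonical}.
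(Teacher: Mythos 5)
Your overall strategy — compare non-canonical contributions to canonical ones and extract a polynomial saving per ``failure of canonicity'' — is the same as the paper's, and your intuition about the source of the saving (rigidity of the weak paths forces extra vertex-coincidences) is correct. But your bookkeeping differs from the paper's in a way that matters. The paper does \emph{not} try to define ``defects'' in the intersection and extract an $n^{-1}$ saving in the embedding count per defect. Instead it parameterises a non-canonical meeting by the \emph{minimal cover} $\CP$ (the smallest collection of vertex-disjoint $\ell$-paths in $H_\sigma$ covering exactly the vertices of $H_\sigma\cap H_\tau$), which is a bona fide $(b,a)$-configuration, together with the number $k$ of edges of $\CP$ that are absent from the actual intersection. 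Since $\CP$ and the intersection cover the same vertices, the free-vertex arrangement count is literally that of the canonical $(b,a)$ case; the extra embedding freedom is only an $\binom{b}{k}R^k$ factor concentrated near the missing edges; and the entire saving comes from the \emph{probability weight}, which is $p^{-(b-k)}=p^{-b}\cdot p^k$ rather than $p^{-b}$. Summing $\binom{b}{k}(Rp)^k$ over $k\ge 1$ gives the $O(1)$ bound. This sidesteps both of the obstacles you flag — double counting, and nearby defects eating each other's rigidity — because the cover and the set of missing edges are canonically determined by $\tau$, with no need to iteratively strip ``lex-first defective edges.''

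Your version has a genuine gap at the step you yourself identify as the crux: the claim that each defect gives a clean, multiplicative factor of $O(1/n)$ on the count. A step of length $j\ge 2$ in a weak path produces $(j-1)s$ extra covered vertices, not one, and it simultaneously enlarges the embedding count by a bounded factor near that step; so the ``saving per defect'' is not a single $n^{-1}$ but a product of a vertex-count loss and an embedding gain whose net sign has to be verified. You also omit the combinatorial factor for \emph{which} edges are defective (the analogue of $\binom{b}{k}$ in the paper), which your formula $N'(b;a,d)\le L^{O(d)} n^{-d}\binom{m}{a}\binom{b}{a}a!\,L^a\cdots$ does not visibly contain, and it is ambiguous whether $(b,a)$ there refers to the intersection or to the backbone after stripping defects — the two yield different weights $p^{-b}$. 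None of these is insurmountable, and in fact they collapse to the paper's estimate once one passes to the minimal-cover parameterisation, but as written your proposal asserts rather than proves the key $n^{-d}$ saving. As a side remark, your stronger claim $\Gamma'=O(1/n)$ is plausible but unnecessary; the paper's computation yields only $O(n^{1-s})$ from the $k$-sum when $s=1$, which is $O(1)$ and suffices.
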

	\begin{proof}
		We shall prove the claim by means of a comparison argument: we shall demonstrate how we may group summands in $\Gamma'$ so as to get estimates analogous to those that we obtained for $\Gamma$ in the proof of Claim~\ref{canonical}.

		For any $\sigma, \tau \in Q_n$, we may decompose the intersection of $H_\sigma$ and $H_\tau$ into a collection of vertex-disjoint \emph{weak paths}, where a \emph{weak path} is a just a sequence of edges in which every consecutive pair of edges intersect.

		We fix a permutation $\sigma \in Q_n$ for the rest of the argument. Given a weak path $P'$ in $H_\sigma$, notice that there is a unique minimal $\ell$-path $P$ in $H_\sigma$ covering precisely the same set of vertices as $P'$; we call $P$ the \emph{minimal cover} of $P'$. Now, given any $\tau \in Q_n$, there is a unique minimal \emph{covering configuration in $\sigma$} associated with $\tau$ obtained by taking the minimal covers of each of the weak paths in which $H_\tau$ meets $H_\sigma$. To prove the claim, we shall show that the contributions to $\Gamma'$ from all those $\tau \in Q_n$ whose covering configuration is $\CP$ is comparable to the contributions to $\Gamma_c$ from all those $\tau \in Q_n$ meeting $\sigma$ canonically in $\CP$.

		We fix a $(b,a)$ configuration $\CP$ in $\sigma$ consisting of $b$ edges in total distributed across $a$ $\ell$-paths, and we consider the set of permutations $\tau \in Q_n$ with minimal cover $\CP$ that meet $\sigma$ non-canonically; we additionally parametrise this set by $1 \le k \le b$, writing $Q(\CP, k)$ for the set of such permutations $\tau$ for which there are $k$ edges of $\CP$ missing from the intersection~$\CP'$ of $H_\tau$ and $H_\sigma$.

		We claim that the number of ways to select a configuration $\CP'$ as above, and then embed the vertices covered by $\CP'$ into a permutation $\tau \in Q(\CP, k)$ in such a way that $\CP'$ is contained in $H_\tau$ is, up to subblock equivalence, at most
		\[\binom{b}{k} \binom{m}{a}a! R^{a + k}\]
		for some $R = R(r, \ell)$.

		We may verify the estimate above as follows. The number of possible choices for $\CP'$, namely the number of ways to choose $k$ edges from $\CP$ such that each of the $\ell$-paths of~$\CP$ remains a weak path after these $k$ edges are removed, may be crudely bounded above by $\binom{b}{k}$. As in the proof of Claim~\ref{canonical}, there are $\binom{m}{a}a!$ ways to choose the starting blocks of the leftmost edges of the $a$ distinct weak paths of $\CP'$ in $\tau$. Assume now that we have fixed $\mathcal{P}'$, the starting blocks in $\tau$ of the leftmost edges of the $a$ weak paths of $\CP'$, and the directions of embedding of these weak paths into $\tau$ (for which there are $2^a$ choices). Now, it is easy to see from the linear structure of a weak path that the relative order of vertices in disjoint edges of a weak path must be preserved in any embedding of that weak path into $\tau$, so in particular, there are only $O(1)$ choices for the location in $\tau$ of any particular vertex covered by $\CP'$ (once endpoints and directions of embedding have been fixed, as we have assumed). Furthermore, it follows from the rigidity of an $\ell$-path (as in the proof of Claim~\ref{canonical}) that any vertex covered by $\mathcal{P}'$, with $O(1)$ exceptions at the left and right extremities of each of the $a$ weak paths, which possesses potential embedding locations in more than one subblock must necessarily be within $O(1)$ distance (in $\sigma$) of some edge present in $\CP$ but not in $\CP'$; clearly there $O(k)$ such vertices in total. These facts taken together demonstrate the validity of the bound claimed above.

		Now, noting that the contribution of any $\tau \in Q(\CP, k)$ to $\Gamma'$ is a factor of $p^k$ times the contribution to $\Gamma_c$ from any $\tau \in Q_n$ meeting
		$\sigma$ canonically in $\CP$, we may mimic the proof of Claim~\ref{canonical} to show that
		\[\Gamma' \le \sum_{b = 1}^m \sum_{a = 1}^b \left(\exp{(O(a))} \frac{n^{2a}b^a}{a^{2a}} \frac{(n-sb-\ell a)! }{\lambda^{m-b}} \frac{\lambda^m}{n!} \frac{n^{sb}}{C^b\lambda^b \eu^{sb}} \sum_{k=1}^b\binom{b}{k}R^kp^k \right). \]
		Observing that
		\[\sum_{k=1}^b\binom{b}{k}R^kp^k = \sum_{k=1}^b \exp(O(k)) \frac{b^k}{k^kn^{sk}} = O(1), \]
		we are left with an estimate for $\Gamma'$ of the same form as the one for $\Gamma'$ which we showed to be $O(1)$ when $C>1$ in the proof of Claim~\ref{canonical}; the claim follows.
	\end{proof}
	The two claims above together imply that $\Gamma = O(1)$ when $C > 1$, from which it follows that $\E[X^2] = O(\E[X]^2)$ when $C > 1$; the result follows.
\end{proof}

With our moment estimates in hand, we are now ready to prove our main result.

\begin{proof}[Proof of Theorem~\ref{mainthm}]
	As mentioned earlier, the $0$-statement, namely that $\Gr(n,p)$ does not contain a Hamiltonian $\ell$-cycle with high probability if $p < (1 + \eps)p^*_{r,\ell} (n)$ follows immediately from Lemma~\ref{expectation} and Markov's inequality.

	We prove the $1$-statement, namely that $\Gr(n,p)$ contains a Hamiltonian $\ell$-cycle with high probability if $p > (1 + \eps)p^*_{r,\ell} (n)$, by showing that the property of containing a Hamiltonian $\ell$-cycle has a sharp threshold, and that this sharp threshold must (asymptotically) necessarily be $p^*_{r,\ell}(n)$.

	If $p > (1+\eps)p^*_{r,\ell} (n)$, then it follows from Lemma~\ref{variance} and the Paley--Zygmund inequality, i.e., Proposition~\ref{pz}, that $\Gr(n,p)$ contains a Hamiltonian $\ell$-cycle with probability at least $\delta>0$ for some $\delta = \delta(\eps, r, \ell)$; consequently, if the property of containing a Hamiltonian $\ell$-cycle has a sharp threshold, this sharp threshold is necessarily asymptotic to $p^*_{r, \ell}(n)$.

	It remains to prove that the monotone $r$-graph property $W = (W_n)_{n \ge 0}$ of containing a Hamiltonian $\ell$-cycle has a sharp threshold, so suppose for the sake of a contradiction that~$W$ has a coarse threshold.

	It follows from Proposition~\ref{st} that there is a fixed $r$-graph $F$ and a threshold function $\hat p = \hat p(n)$ with the property that for infinitely many $n \in \N$, there is an $r$-graph $H_n \notin W_n$ on $n$ vertices such that adding a random copy of $F$ to $H_n$ is significantly more likely to make the resulting graph contain a Hamiltonian $\ell$-cycle than adding a random collection of edges of density about $\hat p$; concretely, for some universal constants $\alpha, \beta > 0$, we have
	\begin{equation}\label{notboost} \Prob(H_n \cup \Gr(n,\beta \hat p) \in W_n ) < 1-2\alpha, \end{equation}
	where the random $r$-graph $\Gr(n, \beta \hat p)$ is taken to be on the same vertex set as $H_n$, and
	\begin{equation}\label{boost} \Prob(H_n \cup \tilde F  \in W_n ) > 1-\alpha, \end{equation}
	where $\tilde F$ denotes a random copy of $F$ on the same vertex set as $H_n$.

	Now, the only way $F$ can help induce a Hamiltonian $\ell$-cycle in $H_n$ is through some sub-hypergraph of itself that appears in all large enough $\ell$-cycles, so by pigeonholing (and adding extra edges if necessary), we conclude from~\eqref{boost} that there exists a fixed $\ell$-path $P$, say with $k$ edges on $\ell + sk$ vertices, with the property that, for some universal constant $\gamma > 0$, we have
	\[ \Prob(H_n \cup \tilde P  \in W_n ) > \gamma, \]
	where $\tilde P$ again denotes a random copy of $P$ on the same vertex set as $H_n$. In other words, a positive fraction of all the possible ways to embed $P$ into the vertex set of $H_n$ are \emph{useful} and end up completing Hamiltonian $\ell$-cycles.

	Since $\hat p$ is an asymptotic threshold for $W$, clearly $\hat p(n) = \Theta(p^*_{r,\ell} (n)) = \Theta(n^{-s})$, since $p^*_{r, \ell}$ is also an asymptotic threshold for $W$, as can be read off from the proof of Lemma~\ref{variance}. On the other hand, the expected number of useful copies of $P$ created by the addition of a~$\beta \hat p = \Theta(n^{-s})$ density of random edges to $H_n$ is
	\[\Omega\left(\binom{n}{\ell + sk}(n^{-s})^k\right) = \Omega\left(n^\ell\right),
	\] and a routine application of the second moment method (indeed, $\ell$-paths are suitably `balanced') shows that adding a $\beta \hat p$ density of random edges to $H_n$ must, with high probability, create at least one useful copy of $P$ in $H_n$ and complete a Hamiltonian $\ell$-cycle, contradicting~\eqref{notboost}.

	We have now shown that $W$ has a sharp threshold, and that this threshold must be asymptotic to $p^*_{r,\ell}(n)$; the $1$-statement follows, completing the proof.
\end{proof}
\section{Conclusion}\label{conc}
There are two basic questions that our work raises; we conclude this paper by discussing these problems.

First, now that we have identified the sharp threshold for the appearance of nonlinear Hamiltonian cycles, one can and should ask about the `width' of the critical window. Since the sharp threshold corresponds to the expectation threshold, we do not expect the hitting time to be of interest. Nonetheless, it is plausible that the expectation threshold is much sharper than what we have shown, and we conjecture the following.
\begin{conjecture}\label{window}
	For all integers $r > \ell > 1$, if $p = p(n)$ is such that, as $n \to \infty$, we have $\E[X_\ell] \to \infty$, then
	\[ \Prob \left( \Gr(n,p) \text{ contains a Hamiltonian $\ell$-cycle} \right) \to 1, \]
	where $X_\ell$ is the random variable counting the number of Hamiltonian $\ell$-cycles in $\Gr(n,p)$.
\end{conjecture}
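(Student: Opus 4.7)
The plan is to push Theorem~\ref{mainthm} into the entire critical window. By Lemma~\ref{expectation} and Stirling, the hypothesis $\E[X_\ell]\to\infty$ is, up to polynomial factors, equivalent to $\sqrt{n}\,C(n)^{n/s}\to\infty$, where $C(n)=p/p^*_{r,\ell}(n)$; thus for $C(n)\ge 1+\eps$ with $\eps$ fixed, Theorem~\ref{mainthm} already applies, and the entire content of the conjecture lies in the regime $C(n)=1+o(1)$ with $C(n)-1\gg (\log n)/n$.

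My first line of attack would be to refine the second moment estimate of Lemma~\ref{variance}. The slack is clearly visible in the proof of Claim~\ref{canonical}, where the bound reduces to $\Gamma_c\le\sum_{b=1}^m C^{-b+o(b)}\,b$; this is $O(1)$ for $C$ bounded away from~$1$ but blows up like $(C-1)^{-2}$ as $C\to 1$. Accordingly, I would try to identify a restricted counting variable $Y$ supported on a family of ``well-spread'' Hamiltonian $\ell$-cycles --- for instance, those containing no short dense cluster of $\ell$-paths supported on few vertices --- for which $\E[Y]=(1-o(1))\E[X_\ell]$ and $\E[Y^2]=(1+o(1))\E[Y]^2$ uniformly throughout the critical window. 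Paley--Zygmund, namely Proposition~\ref{pz}, would then deliver the conjecture.

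As a complementary approach, I would pursue a two-round exposure. Write $p=p_1+p_2$ with $p_2=o(p_1)$, arranging that $\E[X_\ell]\to\infty$ already at density $p_1$; use $\Gr(n,p_1)$ together with a variant of Claim~\ref{canonical} to build a near-spanning collection of long $\ell$-paths covering all but $o(n)$ vertices, and then sprinkle $\Gr(n,p_2)$ to close off into a Hamiltonian $\ell$-cycle by an absorption argument. This sidesteps the second moment blowup at the cost of requiring a robust random-hypergraph absorbing lemma tailored to nonlinear $\ell$-cycles; such lemmas exist in the deterministic setting but would need careful adaptation to the random-residue regime here.

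The central obstacle is that the second moment blowup is genuine: the unrestricted ratio $\E[X_\ell^2]/\E[X_\ell]^2$ is itself $1+\Omega(1)$ throughout the critical window, so any restricted variable $Y$ must prune a non-negligible fraction of the contributions to $X_\ell$ --- yet we need $\E[Y]\to\infty$ under the bare hypothesis $\E[X_\ell]\to\infty$, which leaves only polynomial slack in the bookkeeping. Finding a structural definition of ``well-spread'' that excises precisely the heavy pair contributions without discarding too much of the first moment is the main technical difficulty, and I would expect any successful proof to draw on spread-based machinery in the spirit of the recent Park--Pham theorem and its descendants.
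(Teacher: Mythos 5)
This statement is Conjecture~\ref{window}, which the paper explicitly leaves open; there is no proof in the paper to compare against, so the relevant question is whether your argument closes the gap on its own. It does not. Your write-up is candid about this: both routes you sketch terminate at an unresolved obstruction rather than at a conclusion. In the restricted-second-moment route, you correctly observe that the unrestricted ratio $\E[X_\ell^2]/\E[X_\ell]^2$ is $1+\Omega(1)$ throughout the critical window and that one would need a variable $Y$ with $\E[Y]=(1-o(1))\E[X_\ell]$ and $\E[Y^2]=(1+o(1))\E[Y]^2$, but you never produce such a $Y$ or verify either estimate; you only name the desideratum. The difficulty is genuine and not merely bookkeeping: since the heavy pair contributions already cost a constant factor, any pruning that kills them must remove a positive fraction of cycles, and showing this fraction is $o(1)$ of $\E[X_\ell]$ under the bare hypothesis $\E[X_\ell]\to\infty$ (which, as you note, leaves only $\sqrt{n}\,C(n)^{n/s}\to\infty$ of slack) is exactly the content of the conjecture.

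The two-round exposure route has the same status. You invoke ``a robust random-hypergraph absorbing lemma tailored to nonlinear $\ell$-cycles'' as if it were available, but no such lemma is proved or cited in a form that works at density $p=(1+o(1))p^*_{r,\ell}$; existing absorption arguments for Hamiltonian $\ell$-cycles in random hypergraphs operate at densities polylogarithmically above the threshold and do not give the sharp window the conjecture demands. Moreover, the decomposition $p=p_1+p_2$ with $p_2=o(p_1)$ while $\E[X_\ell]\to\infty$ already at $p_1$ is not always achievable inside the window: if $C(n)-1$ decays as fast as the hypothesis permits, then $p_1$ would have to satisfy $C_1(n)\ge 1$ and there is essentially no room left for a sprinkling round. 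In short, you have identified the right obstacles --- the constant-factor second moment blowup and the need for a spread-type or absorption-type mechanism --- and your instinct that Park--Pham-style spread machinery is the natural tool is reasonable, but neither branch of your proposal actually establishes the conjecture, which remains open.
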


Second, it is natural to ask what happens for linear Hamiltonian cycles. The proof of Theorem~\ref{mainthm} shows that the appearance of a linear Hamiltonian cycle in $\Gr(n,p)$ has a sharp threshold, and we expect this sharp threshold to coincide with the sharp threshold for the disappearance of isolated vertices (i.e., vertices not contained in any edges). For $r \ge 3$, writing
\[p^{\mathrm{deg}}_r(n) = \frac{(r-1)!\log n} { n^{r-1}}\] to denote the sharp threshold for the disappearance of isolated vertices in $\Gr(n,p)$, we predict the following.

\begin{conjecture}\label{linear}
	For each $r \ge 3$, $p^{\mathrm{deg}}_r(n)$ is the sharp threshold for the appearance of a linear Hamiltonian cycle in $\Gr(n,p)$.
\end{conjecture}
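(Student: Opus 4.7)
The plan is to follow the template of the proof of Theorem~\ref{mainthm}, with $p^{\mathrm{deg}}_r(n)$ playing the role of the critical density and the isolated vertex obstruction replacing the expectation calculation as the binding constraint. The $0$-statement is immediate: for $p < (1-\eps)p^{\mathrm{deg}}_r(n)$, standard first moment computations show that $\Gr(n,p)$ contains isolated vertices with probability $1 - o(1)$, and no hypergraph with an isolated vertex can contain a Hamiltonian cycle of any kind.

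For the $1$-statement, I would combine a second moment argument with Friedgut's theorem (Proposition~\ref{st}) exactly as in the proof of Theorem~\ref{mainthm}. Writing $X$ for the number of loose Hamiltonian cycles in $\Gr(n,p)$, counted modulo subblock equivalence (with $s = r-1$, $t = 1$, and $\lambda = (r-2)!$), the aim is to establish that $\Prob(X > 0) \ge \delta(\eps, r) > 0$ at some density $p_0 \le (1-\eps)p^{\mathrm{deg}}_r(n)$. Granting this, the property must have a sharp threshold, which must lie at $\Theta(p^{\mathrm{deg}}_r)$: assuming a coarse threshold for contradiction, Friedgut produces a fixed $\ell$-path $P$ with $k$ edges such that random copies of $P$ disproportionately complete Hamiltonian cycles in some sequence $H_n \notin W_n$, yet at density $\Theta(p^{\mathrm{deg}}_r)$ a routine second moment computation for useful copies of $P$ shows that $\Omega(n(\log n)^k)$ such copies are created with high probability. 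These suffice to complete a Hamiltonian $\ell$-cycle, contradicting the Friedgut output.

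The principal obstacle is securing that initial constant positive probability of containing a loose Hamiltonian cycle. Lemma~\ref{variance} does not transfer directly to $\ell = 1$: loose $\ell$-paths are considerably less rigid than their nonlinear cousins, since consecutive edges share only a single vertex, and the rigidity bounds underpinning Claim~\ref{canonical} degrade in this regime. One plausible workaround is to work at a density $p_0 = \omega(p^*_{r,1})$ that is still $o(p^{\mathrm{deg}}_r)$ (for instance, $p_0 = (\log \log n) \cdot p^*_{r,1}$) and to first condition on a typical pseudorandomness event, say the approximate regularity of all low-order codegrees, before running a refined second moment argument on the conditioned space. An alternative route would be a two-round exposure: use the first round to build, with constant probability, a near-Hamiltonian loose cycle missing only a small residual set of vertices via a moment argument for almost-spanning structures, and use the second round to incorporate the residual vertices via a P\'osa-style rotation-extension or a sparse absorption argument. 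Developing a workable absorption gadget for loose cycles at logarithmic densities seems to be the heart of the matter, and is what makes the $\ell=1$ case genuinely harder than the one resolved in Theorem~\ref{mainthm}.
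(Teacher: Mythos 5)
This statement is a \emph{conjecture} in the paper, not a theorem: the authors give no proof and remark explicitly at the end of Section~\ref{conc} that ``even Conjecture~\ref{linear} appears to be out of the reach of existing techniques.'' There is therefore no proof in the paper against which to compare your attempt, and what you have written is not a proof but a sketch of a programme, which you yourself acknowledge in your closing paragraph.

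Two remarks on the programme. First, there is a logical slip. You propose to establish $\Prob(X>0)\ge\delta(\eps,r)>0$ at some density $p_0\le(1-\eps)p^{\mathrm{deg}}_r(n)$; but if the sharp threshold really sits at $p^{\mathrm{deg}}_r$ then, by your own $0$-statement, the probability of containing a loose Hamiltonian cycle tends to $0$ at any such $p_0$. To run the Theorem~\ref{mainthm} template you would instead need constant positive probability at density $(1+\eps)p^{\mathrm{deg}}_r$, and then Friedgut to bootstrap this to probability $1-o(1)$ everywhere above. Second, and more fundamentally, the obstruction you identify is not a matter of the rigidity estimates in Claim~\ref{canonical} merely ``degrading''; it is structural and unavoidable. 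A second moment bound of the form $\E[X^2]=O(\E[X]^2)$ cannot hold at any density $p$ with $p^*_{r,1}\ll p\ll p^{\mathrm{deg}}_r$: by Paley--Zygmund it would yield constant positive probability of a loose Hamiltonian cycle in a regime where the random hypergraph still has isolated vertices with high probability. Since $p^{\mathrm{deg}}_r/p^*_{r,1}=\Theta(\log n)$, this entire logarithmic window is out of reach of any quotiented second moment argument, so the conditioning, rotation--extension, and absorption devices you float are not refinements of the paper's method but would have to \emph{replace} its core. You are right that building a workable absorption or sprinkling mechanism at $\Theta(\log n/n^{r-1})$ densities is the heart of the matter --- indeed, that is precisely why the authors leave the linear case as an open conjecture, with only Frieze's semi-sharp result for $r=3$ and the Dudek--Frieze asymptotic threshold for general $r$ known.
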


In the case where $r=3$, Frieze~\cite{alan} showed that $p^{\mathrm{deg}}_3$ is a semi-sharp threshold for the appearance of a linear Hamiltonian cycle, and Dudek and Frieze~\cite{loose} showed that $p^{\mathrm{deg}}_r$ is an asymptotic threshold for the appearance of a linear Hamiltonian cycle for all $r\ge 3$. Of course, we expect much more than Conjecture~\ref{linear} to be true and naturally expect the hitting time for the appearance of a linear Hamiltonian cycle to coincide with the hitting time for the disappearance of isolated vertices, but even Conjecture~\ref{linear} appears to be out of the reach of existing techniques.

\section*{Acknowledgements}
The first author wishes to acknowledge support from NSF grant DMS-1800521.

\begin{bibdiv}
\begin{biblist}

\bib{ham4}{article}{
   author={Ajtai, M.},
   author={Koml\'os, J.},
   author={Szemer\'edi, E.},
   title={First occurrence of Hamilton cycles in random graphs},
   conference={
      title={Cycles in graphs},
      address={Burnaby, B.C.},
      date={1982},
   },
   book={
      series={North-Holland Math. Stud.},
      volume={115},
      publisher={North-Holland, Amsterdam},
   },
   date={1985},
   pages={173--178},
   review={\MR{821516}},
}

\bib{ham3}{article}{
   author={Bollob\'as, B\'ela},
   title={The evolution of sparse graphs},
   conference={
      title={Graph theory and combinatorics},
      address={Cambridge},
      date={1983},
   },
   book={
      publisher={Academic Press, London},
   },
   date={1984},
   pages={35--57},
   review={\MR{777163}},
}

\bib{thresh}{article}{
   author={Bollob\'{a}s, B.},
   author={Thomason, A.},
   title={Threshold functions},
   journal={Combinatorica},
   volume={7},
   date={1987},
   number={1},
   pages={35--38},
   issn={0209-9683},
   review={\MR{905149}},
   doi={10.1007/BF02579198},
}

\bib{tight}{article}{
   author={Dudek, Andrzej},
   author={Frieze, Alan},
   title={Tight Hamilton cycles in random uniform hypergraphs},
   journal={Random Structures Algorithms},
   volume={42},
   date={2013},
   number={3},
   pages={374--385},
   issn={1042-9832},
   review={\MR{3039684}},
   doi={10.1002/rsa.20404},
}

\bib{loose}{article}{
   author={Dudek, Andrzej},
   author={Frieze, Alan},
   title={Loose Hamilton cycles in random uniform hypergraphs},
   journal={Electron. J. Combin.},
   volume={18},
   date={2011},
   number={1},
   pages={Paper 48, 14},
   issn={1077-8926},
   review={\MR{2776824}},
}		

\bib{friedgut1}{article}{
   author={Friedgut, Ehud},
   title={Sharp thresholds of graph properties, and the $k$-sat problem},
   note={With an appendix by Jean Bourgain},
   journal={J. Amer. Math. Soc.},
   volume={12},
   date={1999},
   number={4},
   pages={1017--1054},
   issn={0894-0347},
   review={\MR{1678031}},
   doi={10.1090/S0894-0347-99-00305-7},
}

\bib{friedgut2}{article}{
   author={Friedgut, Ehud},
   title={Hunting for sharp thresholds},
   journal={Random Structures Algorithms},
   volume={26},
   date={2005},
   number={1-2},
   pages={37--51},
   issn={1042-9832},
   review={\MR{2116574}},
   doi={10.1002/rsa.20042},
}

\bib{alan}{article}{
   author={Frieze, Alan},
   title={Loose Hamilton cycles in random 3-uniform hypergraphs},
   journal={Electron. J. Combin.},
   volume={17},
   date={2010},
   number={1},
   pages={Note 28, 4},
   issn={1077-8926},
   review={\MR{2651737}},
}

\bib{jkv}{article}{
   author={Johansson, Anders},
   author={Kahn, Jeff},
   author={Vu, Van},
   title={Factors in random graphs},
   journal={Random Structures Algorithms},
   volume={33},
   date={2008},
   number={1},
   pages={1--28},
   issn={1042-9832},
   review={\MR{2428975}},
   doi={10.1002/rsa.20224},
}

\bib{jeff}{misc}{
	author={Kahn, J.},
	note={Personal communication},
	date={February 2018},
	
}

\bib{ham2}{article}{
   author={Koml\'{o}s, J\'{a}nos},
   author={Szemer\'{e}di, Endre},
   title={Limit distribution for the existence of Hamiltonian cycles in a
   random graph},
   journal={Discrete Math.},
   volume={43},
   date={1983},
   number={1},
   pages={55--63},
   issn={0012-365X},
   review={\MR{680304}},
   doi={10.1016/0012-365X(83)90021-3},
}

\bib{richard}{article}{
   author={Montgomery, R.},
   title={Spanning trees in random graphs},
   note={Submitted},
   eprint={1810.03299},
}

\bib{ham1}{article}{
   author={P\'osa, L.},
   title={Hamiltonian circuits in random graphs},
   journal={Discrete Math.},
   volume={14},
   date={1976},
   number={4},
   pages={359--364},
   issn={0012-365X},
   review={\MR{0389666}},
}

\bib{oliver}{article}{
   author={Riordan, Oliver},
   title={Spanning subgraphs of random graphs},
   journal={Combin. Probab. Comput.},
   volume={9},
   date={2000},
   number={2},
   pages={125--148},
   issn={0963-5483},
   review={\MR{1762785}},
}

\end{biblist}
\end{bibdiv}

\end{document}